\newtheorem{theorem}{Theorem}[section]
\newtheorem{corollary}[theorem]{Corollary}
\newtheorem{lemma}[theorem]{Lemma}
\theoremstyle{definition}
\newtheorem{remark}[theorem]{Remark}
\newtheorem{example}[theorem]{Example}
\numberwithin{equation}{section}
\newcommand{\med}{\medskip\noindent}
\newcommand{\R}{\mathbb{R}}
\newcommand{\Rd}{{\R^d}}
\newcommand{\ov}{\overline}
\newcommand{\var}{{\rm{var}}}
\newcommand{\opnorm}[1]{{\left\vert\kern-0.25ex\left\vert\kern-0.25ex\left\vert #1 
		\right\vert\kern-0.25ex\right\vert\kern-0.25ex\right\vert}}
\newcommand{\argu}{{\,\cdot\,}}
\newcommand{\PP}{\mathcal{P}}
\newcommand{\pairing}[1]{{\left \langle #1 \right \rangle}}
\newcommand{\norm}[1]{\Arrowvert #1 \Arrowvert}
\newcommand{\abs}[1]{{\left \lvert #1 \right \rvert}}
\newcommand{\Lip}{{\mathrm{lip}}}
\newcommand{\eps}{\varepsilon}
\newcommand{\V}{{\mathcal{V}}}
\newcommand{\one}{{{\bf 1}
		\kern-0,28em \rm l}}
\DeclareMathOperator{\spt}{sp}
\def\dis{\displaystyle}
\begin{document}

	\title{ Sharp inequalities between Zolotarev and Wasserstein distances in $\PP_2(\R^d)$ }

	

	\author{Karol Bo{\l}botowski \and Guy Bouchitt\'{e}}


	\date{\today}
	
	\begin{abstract}  
		Based on a new Kantorovich-Rubinstein duality principle for the Hessian that was recently established by the two authors,
		 we extend the Rio inequality to any dimension $d \ge  \nolinebreak 1$ with an optimal constant.  Similarly, we propose an optimal upper bound for the ratio  of Zolotarev distance $Z_2(\mu,\nu)$ to Wasserstein distance $W_2(\mu,\nu)$ when  $\mu,\nu \in \PP_2(\R^d)$  are centred probabilities with prescribed variances.  
 		\end{abstract}

\maketitle

\noindent \textbf{Keywords:}  Probability metrics,  Optimal transport, Kantorovich-Rubinstein duality,  Wasserstein distance, Zolotarev distance, Rio inequality.

\noindent \textbf{2020 Mathematics Subject Classification:}   26D10, 49K30, 60E05, 60E15.
	
	\dedicatory{}
	
	
	\maketitle

	\vskip1cm

\section{introduction}

A considerable amount of literature is devoted to constructing metric structures on sets of measures. 
 It begins with total variation (TVD) and then, based on the needs of statistics and information theory, moves on to the pioneering works of Kullback-Leibler divergence (KL divergence) and Hellinger-Bhattacharyya distance.
More recently, Monge-Kantorovich optimal transport theory gave rise to transportation metrics, such as the Prokhorov distance (see for instance \cite{Dudley}) and the Wasserstein distances \cite{villani2003,santambrogio2015}. In this vein, V.M. Zolotarev introduced in the 1970s the vast class of   so-called ideal metrics \cite{zolotarev1978}.  This class includes TVD and the first Wasserstein metric.

This topic remains active today, providing a fundamental framework for several areas of research. Notably, it is used in mathematical physics to treat random measures as random variables in a Polish metric space. In statistics, it is used to evaluate rates of convergence in the Central Limit Theorem and to establish a large deviation principle for interacting particle models. It is also used in the promising field of machine learning.



In $\Rd$, or more generally in a complete metric space, the Wasserstein distance $W_p$ is amongst the most popular choices for a metric between two elements of $\PP_p(\Rd)$ (probabilities of finite $p$-th moment), where $p\in [1,+\infty]$.
It is expressed through the Monge-Kantorovich optimal transport problem: if $p< +\infty$, then
\begin{equation*}
	W_p(\mu,\nu) = \left( \inf\left\{ \iint \abs{x-y}^p \gamma(dxdy) \, : \, 	\gamma \in \Gamma(\mu,\nu)   \right\} \right)^{\frac{1}{p}},
\end{equation*}
where $\Gamma(\mu,\nu)$ is the set of admissible transport plans, i.e. elements of $\PP(\Rd \times \Rd)$ whose first and second marginals are $\mu$ and $\nu$, respectively.  Arguably, the most prominent cases of the Wasserstein distance are $p = 1,2$. By virtue of the Kantorovich-Rubinstein duality, the Monge distance $W_1$ can be reset as the supremum with respect to 1-Lipschtitz potentials,
\begin{equation}
	\label{W1}
	W_1(\mu,\nu) = \sup \left\{ \int u \, d(\nu-\mu) \ : \ u \in C^{0,1}(\Rd), \ \ \Lip(u) \le 1 \right\} .
\end{equation}
With his seminal paper  \cite{brenier1991}, Y. Brenier has established the fundamental importance of the quadratic case $W_2$. He  has proved that the optimal plans are supported on  cyclically monotone subset of $(\Rd)^2$. Furthermore, under the condition of absolute continuity of one of the marginals, the unique optimal plan is induced by a transport map being the gradient of a convex potential.

Another family of metrics between probabilities on $\Rd$ that we  are interested in is the one proposed by V.M. Zolotarev \cite{zolotarev1978}. For an integer\footnote{The definition of $Z_r$ can be extended to any real order $r >0$: the right hand side of the two-point inequality must be then changed to $\abs{x-y}^\alpha$ for $\alpha = r+1 - \lceil r \rceil$.} $r \geq 1$, the Zolotarev distance generalizes the Monge distance \eqref{W1} to higher orders,
\begin{equation*}
	Z_r(\mu,\nu) = \sup\left\{ \int \! u \, d(\nu-\mu) \, : \, u \in C^{r-1,1}(\Rd), \   \norm{D^{r-1}u(x) - D^{r-1}u(y)}  \leq \abs{x-y} \ \ \forall\, x,y \in \Rd \right\},
\end{equation*}
where $D^k u$ is the $k$-th Fréchet derivative and $\norm{\argu}$ stands for the operator norm.
Clearly, $Z_1 = W_1$. Note that the two-point inequality can be equivalently replaced by $\norm{D^r u} \leq 1$ enforced a.e. in $\Rd$. The Zolotarev distance $Z_r$ is an example of \textit{ideal metric} of order $r$: it is scale-$r$-homogenous and it is subadditive under convolution. These properties render $Z_r$ useful in obtaining Berry-Esseen-type  bounds \cite{esseen1958} in the Central Limit Theorem, see Chapters 14,\,15 in \cite{rachev1991} for more details.



In the short note \cite{belili2000} it was shown that the metrics $Z_r$ and $W_r$ induce the same topology on $\PP_r(\Rd)$: the one of weak convergence together with convergence of $r$-th moments\footnote{More accurately, the equivalence between the $Z_r$ and $W_r$ metrics holds true only on the relevant subspaces of $\PP_r(\Rd)$ with fixed mixed moments of order less than $r$, cf. Remark \ref{finiteZ} for the case $r=2$.}. Looking for quantitative results comparing the two families of distances is an active field of study. For instance, since $Z_r$ and $(W_r)^r$  scale identically with respect to dilations, the inequalities of the kind $Z_r \ge  C_r  W_r^r$ can be pursued. Thus far, however, the results in this direction are mainly restricted to one dimension $d=1$, where we can utilize the closed solutions for $W_r$ that are expressed via the quantile functions of $\mu,\nu$. The first result on the real line can be traced back to the works of E. Rio, cf. \cite{Rio1998}. The constants $C_r$ were then improved in \cite{rio2009}. For certain orders $r$, an explicit formula for the constant was given. For instance, the inequality  $Z_2 \ge  C_2  W_2^2$ holds true with $C_2 = \frac{1}{4}$. Similar inequalities in 1D were established in \cite{bobkov2023} for a modified version of Zolotarev distance where a uniform bound on the lower-order derivatives $u^{(k)}$ is included in the defining supremum.

In this paper we take advantage of the  novel duality method, recently developed by the two authors in \cite{bolbou2024}, to extend the validity of Rio's inequality for $r=2$,
\begin{equation}\label{Rio-opt}
Z_2(\mu,\nu) \ge  \frac1{4} W_2^2(\mu,\nu),
\end{equation}
to any dimension $d \geq 1$, showing that the constant $\frac1{4}$ is optimal and that the inequality is strict unless $\mu=\nu$ (see Theorem \ref{main}). 

On the other hand,  as we will show, the reverse inequality of the same form cannot hold true for any finite constant. However, the topologies induced by the $Z_2$ and $W_2$ distances are equivalent.
By exploiting the same duality technique, we provide a quantitative confirmation  of this topological equivalence by showing a modified  reverse inequality under the assumption that the barycentres of $\mu$ and $\nu$ coincide,
$$ Z_2(\mu,\nu)\, \le \frac{1}{2} (\sigma_\mu +\sigma_\nu)  \, W_2(\mu,\nu) ,$$
where $\sigma_\mu$, $\sigma_\nu$ are the respective standard deviations of $\mu, \nu$ (see  Theorem \ref{thm_upper}). The factor $\frac{1}{2} (\sigma_\mu +\sigma_\nu)$ is optimal in the sense to be made precise. 
As a by-product, we deduce in Corollary \ref{variant_upper} an alternative version of the upper bound inequality involving the variances and where the inequality  is strict unless $\mu=\nu$,
\begin{equation}\label{new-upper}
Z_2(\mu,\nu) \, \leq \ \sqrt{\frac{\var\, \mu + \var\, \nu}{2}}\ W_2(\mu,\nu) .
\end{equation}
It is worth to note that similar reverse inequalities were proposed in \cite{bobkov2023} for the modified Zolotarev distance, again in one dimension only.

\medskip
To conclude the introduction, let us briefly mention one of the possible applications of the new inequalities, which  can be traced back to the paper \cite{rio2009}. Therein, Rio has derived his inequality to establish optimal convergence rates in the univariate Central Limit Theorem with respect to  $W_2$ distance. More precisely, with $\mu_n$ being the law of the sum of iid random variables of zero mean and unit variance $n^{-1/2} \sum_{i=1}^n X_n$, and with $\nu$ being the standard normal distribution, he has shown that $W_2(\mu_n,\nu)$ decays as $n^{-1/2}$ provided that $X_1$ has a finite forth moment. 

 Similarly, our multidimensional extension \eqref{Rio-opt} could potentially pave the way towards finding new rates for the multivariate case of CLT in Wasserstein distance, see \cite{bonis2020,bonis2024} for the state of the art. Similarly as in \cite{rio2009}, the strategy would be to work on the rates in $Z_2$ instead of directly in $W_2$.  One way of estimating  the Zolotarev distance in CLT is by employing  the Stein method \cite{stein1986}, see also the survey \cite{ross2011}. For instance, the paper \cite{fathi2021} has put forth the notion of higher-order Stein kernels which has allowed the author to provide $n^{-r/2}$ decay rates for $Z_r$ under  additional moment constraints and a Poincaré inequality condition. In \cite{gaunt2020} the Stein method was applied in the context of the smooth Wasserstein distance which is reminiscent of the Zolotarev distance.

\medskip
The  paper is organized as follows: in the preliminary Section  2,  we present the duality scheme introduced in \cite{bolbou2024} for the $Z_2$ distance and its connection with convex order and the three marginal optimal transport formulation that involves martingale conditions. Then, we deduce an identity that will be crucial in establishing the upper bound inequality (see Lemma \ref{magic_formula}).
In Section 3, we prove a sharp  generalization of the Rio lower bound inequality to any dimension (see Theorem \ref{main}). In the last Section 4, we establish a new sharp upper bound inequality involving  standard deviations
(see Theorem \ref{thm_upper}) and its variant in Corollary \ref{variant_upper}.

\subsection*{Acknowledgments}

This paper was conceived during the first author's visits to the University of Toulon and the second author's visits to the Lagrange Mathematics and Computing Research Center in Paris.  We are grateful to the Lagrange Center for funding these visits. We would also like to thank G. Carlier, Q. Merigot, and F. Santambrogio for their valuable discussions, which greatly influenced the development of the probabilistic section of the paper \cite{bolbou2024}, from which this work benefits. During the preparation of this paper, the first author was a postdoc at the Lagrange Center.
The second author was partially supported by the French project ANR-23-CE40-0017.


\subsection*{Notations} Throughout the paper we will use the following notations.

\begin{itemize}[leftmargin=1.2\parindent]
\item  The Euclidean norm of $z\in\R^d$ is denoted by $|z|$, and  $\pairing{\argu,\argu}$ will be used to denote the canonical scalar product.

\item  By $C^{0,1}(\Rd)$ (resp. $C^{1,1}(\Rd)$) we understand the Banach space of these continuous functions $u \in C(\Rd)$  (resp. continuously differentiable functions  $u\in C^1(\Rd)$) for which $\Lip(u)<+\infty$ (resp. $\Lip(\nabla u) <+\infty$).



 \item  $\PP(\Rd)$ is the set of probabilities on $\Rd$, i.e. positive Borel measures $\mu$ of unit total mass, $\mu(\Rd) = 1$.  For any $p\ge 1$, $\PP_p(\Rd)$ denotes the subset of whose elements of $\PP(\Rd)$
 which satisfy $\int |x|^p\, d\mu <+\infty$.
 \item  $\delta_{x_0}$ is the Dirac delta measure at the point  $x_0 \in \Rd$.
 
 \item For a probability $\mu$ and a $\mu$-measurable map $T$, by  $T^\#(\mu)$ we understand the push forward, i.e.  $T^\#(\mu)(B) := \mu\big( T^{-1}(B) \big)$ for every Borel set $B$.

  \item The topological support of $\mu \in \PP(\Rd)$ is denoted  by $\spt\mu$.

\item For a probability  $\gamma \in \PP(\Rd \times \ldots \times \Rd)$ on the product of $n$ ambient spaces, by $\gamma_{k_1,\ldots,k_m}$ we understand the marginal $\Pi_{k_1,\ldots,k_m}^\#(\gamma)$ where, for $m\leq n$,  $\Pi_{k_1,\ldots,k_m}$ is the projection onto the coordinates ${k_1,\ldots,k_m}$.

 \item Given $\mu, \nu\in \PP(\Rd)$, we denote by $\Gamma(\mu,\nu)$ the convex subset of transport plans,
 $$ \Gamma(\mu,\nu):= \Big\{ \gamma\in \PP(\R^d \times \Rd)\ :\   \Pi_1^\# \gamma=\mu, \ \ \Pi_2^\#\gamma=\nu \Big\}.$$

 \item $[\mu]$ stands for the barycentre of a probability $\mu\in \PP_1(\Rd)$, whilst its variance  $\var(\mu):= \int \abs{x-[\mu]}^2 \mu(dx)$ is finite if and only if $\mu\in\PP_2(\Rd)$. By $\sigma_\mu := \sqrt{\var(\mu)}$ we will understand the standard deviation.

\item $\nu \succeq_c \mu$ denotes the convex order between  two probability measures $\mu,\nu\in \PP_1(\Rd)$, whilst $\Gamma_{\mathrm{M}}(\mu,\nu) \subset \PP(\Rd \times \Rd)$ is the set of martingale transport plans from $\mu$ to $\nu$.

\item Assume $\mu\in \PP(\R^n)$ and a map $x \mapsto \lambda^x \in \PP(\R^m)$ that is $\mu$-measurable in the sense that $x \mapsto \lambda^x(A)$ is $\mu$-measurable for any Borel set $A \subset \R^m$. We will use the notation,
\begin{equation}
	\label{eq:convention} \gamma= \mu \otimes \lambda^x,
\end{equation}
to define a probability $\gamma \in \PP(\R^n \times \R^m)$  that satisfies,
\begin{align*}
	 \gamma(B):= \int  \left(  \int  \chi_{B}(x,y)\,\lambda^x(dy) \right) \mu(dx),
\end{align*}
for every Borel set  $B \subset \R^n \times \R^m$, where $\chi_B$ is the characteristic function.



%
%
\end{itemize}

\section{Preliminary results}

In this section, we provide an overview of the duality result in \cite{bolbou2024}, which exposes the connection between the second-order Zolotarev distance and a new optimal transport formulation with martingale conditions.
Let us recall that the definition of the second-order Zolotarev distance between two elements  $\mu,\nu \in \mathcal{P}_2(\Rd)$ such that $[\mu] = [\nu]$,
\begin{equation}
\label{Zolotarev}
Z_2(\mu,\nu):= \sup\left\{  \int u \, d(\nu-\mu) \ :\  u\in C^{1,1}(\R^d),\ \  \Lip(\nabla u)\le 1\right\}.
\end{equation}

\begin{remark}
\label{finiteZ}
Note that the above supremum is infinite if the condition  $[\mu] = [\nu]$ is violated.  To avoid this, it is common to add constraints to $u$, namely
that $u(x_0)=0$ and $\nabla u(x_0)=0$,  where $x_0$ is usually the origin (see \cite{belili2000}). While this modified distance is well-defined on the whole set $\PP_2(\R^d)$, it is no longer translation-invariant, which complicates the search for universal inequalities.
\end{remark}

Given a pair  $(\mu,\nu)\in (\PP(\R^d))^2$ such that $[\mu] = [\nu]$, we are going to define a minimization problem in duality with the maximization problem \eqref{Zolotarev}, in which the unknown is a three marginal probability measure $\pi\in  \PP_2((\Rd)^3)$ (in short a  \emph{3-plan}), subject to linear constraints defining the set  $\Sigma(\mu,\nu)\subset \PP_2((\Rd)^3)$ as below .

\subsection{The feasible set $\Sigma(\mu,\nu)$}  It consists of all 3-plans $\pi$ whose first and second marginal is
$\mu$ and $\nu$, respectively, and which satisfy the following  equations,
\begin{equation}\label{equilibrium}
	\iiint \! \pairing{z-x, \Phi(x)} \, \pi(dxdydz)  = \iiint \! \pairing{z-y, \Psi(y)} \, \pi(dxdydz) =0,
\end{equation}
for any smooth test functions $\Phi,\Psi: \Rd \to \Rd$ with compact support.
Note that, by a density argument,  the equality above immediately extends to Borel functions $\Phi, \Psi$ of linear growth, i.e. satisfying $|\Phi|,|\Psi|\le C (1+ |x|)  .$
 The relations \eqref{equilibrium} were  introduced in \cite{bolbou2024} in  the context of structural mechanics\footnote{They encode the moment equilibrium at junctions of beam systems.}
which might seem remote from our present concern.  It turns out  in fact that they also have a natural  interpretation in probability theory through convex order and martingale transport.
 For the reader's convenience, let us recall some classical facts and  definitions. For any  coupling probability  $\gamma \in \Gamma(\mu,\nu)$, there exists a unique disintegration of the form $\gamma = \mu \otimes \gamma^x$ (see the notation \eqref{eq:convention}), where $x \mapsto \gamma^x \in \PP(\Rd)$ is a $\mu$-measurable  mapping, cf. for instance Section 2.5 in \cite{ambrosio2000}. The class of martingale transport plans from $\mu$ to $\nu$ is then defined by,
\begin{equation}\label{def:martingale}
\Gamma_{\mathrm{M}}(\mu,\nu) := \Big \{ \gamma\in \Gamma(\mu,\nu) \ :\  \gamma=\mu \otimes \gamma^x \ ,\  [\gamma^x] = x   \ \ \text{for $\mu$-a.e. $x$}  \Big\} .
 \end{equation} 
  A consequence  of the celebrated Strassen theorem \cite{Strassen-1965}
 is that   $\Gamma_{\mathrm{M}}(\mu,\nu)$ is non-empty if and only if $\nu  \succeq_c \mu$ in the sense of convex order, that is for every \emph{convex} function $\varphi:\R^d \to \R$ there holds the inequality,
\begin{equation}\label{order}
\int \varphi\, d\nu \ge \int  \varphi\, d\mu .
 \end{equation}
The stochastic interpretation of the set $\Sigma(\mu,\nu)$ appears in the following result  in \cite[Lemma 3.3]{bolbou2024} for which we refer for a proof.
\begin{lemma}\label{Sigmamunu=Amunu} Let $\pi\in \PP_2((\Rd)^3)$ be a $3$-plan with marginals $(\mu,\nu,\rho)$. Define the marginals $\pi_{1,3} : =\Pi_{1,3}^\#(\pi)$ and $\pi_{2,3} :=\Pi_{2,3}^\#(\pi)$, which are the push forwards of $\pi(dxdydz)$ through the projection maps $(x,y,z)\mapsto (x,z)$ and $(x,y,z)\mapsto (y,z)$, respectively.  
Then,
\begin{equation}\label{equival}
\pi\in \Sigma(\mu,\nu) \quad \Leftrightarrow \quad  \begin{cases} \pi_{1,3} \in \Gamma_{\mathrm{M}}(\mu,\rho), \\
  \pi_{2,3} \in \Gamma_{\mathrm{M}}(\nu,\rho). \end{cases}
\end{equation}
Accordingly, an element $\rho\in\PP_2(\Rd)$ is the third marginal of a 3-plan $\pi \in \Sigma(\mu,\nu)$  if and only if,
\begin{equation}\label{rho-admi}
\rho  \succeq_c \mu \quad \text{and}\quad \rho  \succeq_c \nu .
 \end{equation}
\end{lemma}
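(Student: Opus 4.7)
The plan is to observe that each of the two integral conditions in \eqref{equilibrium} depends only on one of the pair-marginals $\pi_{1,3}$ or $\pi_{2,3}$, and then to recognise each such condition, after disintegration, as a vector-valued restatement of the martingale property appearing in \eqref{def:martingale}. The second statement of the lemma will then follow from the first by invoking Strassen's theorem together with a gluing construction based on disintegration with respect to the common marginal $\rho$.

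For the first equivalence, note that for any compactly supported smooth $\Phi : \Rd \to \Rd$ the integrand $\pairing{z-x,\Phi(x)}$ is a function of $(x,z)$ alone, so the first equation in \eqref{equilibrium} is equivalent to
\begin{equation*}
\iint \pairing{z-x,\Phi(x)}\, \pi_{1,3}(dxdz) = 0.
\end{equation*}
Disintegrating $\pi_{1,3} = \mu \otimes \gamma^x$ with $\gamma^x \in \PP(\Rd)$ and using that $\rho = \Pi_2^{\#} \pi_{1,3} \in \PP_2(\Rd)$ ensures $x \mapsto [\gamma^x] \in L^1(\mu;\Rd)$, the condition becomes
\begin{equation*}
\int \pairing{[\gamma^x] - x,\, \Phi(x)}\, \mu(dx) = 0 \qquad \forall\, \Phi \in C^\infty_c(\Rd;\Rd).
\end{equation*}
By the fundamental lemma of calculus of variations (applied componentwise and by density in $L^1(\mu;\Rd)$), this forces $[\gamma^x] = x$ for $\mu$-a.e.\ $x$, which is exactly the martingale condition $\pi_{1,3} \in \Gamma_{\mathrm{M}}(\mu,\rho)$. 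The converse direction is immediate since the martingale condition implies the tested identity. The same argument applied to the second equation in \eqref{equilibrium} yields the characterisation $\pi_{2,3} \in \Gamma_{\mathrm{M}}(\nu,\rho)$.

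For the second statement, suppose first that $\rho$ is the third marginal of some $\pi \in \Sigma(\mu,\nu)$. The equivalence \eqref{equival} gives $\pi_{1,3} \in \Gamma_{\mathrm{M}}(\mu,\rho)$ and $\pi_{2,3} \in \Gamma_{\mathrm{M}}(\nu,\rho)$, so Strassen's theorem yields \eqref{rho-admi}. Conversely, assume $\rho \succeq_c \mu$ and $\rho \succeq_c \nu$; Strassen then provides $\gamma_1 \in \Gamma_{\mathrm{M}}(\mu,\rho)$ and $\gamma_2 \in \Gamma_{\mathrm{M}}(\nu,\rho)$. Disintegrating each with respect to its second marginal $\rho$, write $\gamma_1 = \rho \otimes \alpha^z$ and $\gamma_2 = \rho \otimes \beta^z$, and glue them along $\rho$ by setting
\begin{equation*}
\pi(dxdydz) := \alpha^z(dx)\, \beta^z(dy)\, \rho(dz).
\end{equation*}
A direct check shows that $\pi$ has marginals $(\mu,\nu,\rho)$ and satisfies $\pi_{1,3} = \gamma_1$, $\pi_{2,3} = \gamma_2$; hence by the first part $\pi \in \Sigma(\mu,\nu)$, completing the proof.

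The step I expect to require the most care is the passage from the integral identity against arbitrary smooth compactly supported $\Phi$ to the pointwise martingale equality $[\gamma^x] = x$, which requires verifying the integrability of $x \mapsto [\gamma^x]$ with respect to $\mu$ (guaranteed by $\rho \in \PP_2(\Rd)$) so that a standard density argument applies. The remaining work is essentially bookkeeping with disintegrations and an invocation of Strassen's theorem.
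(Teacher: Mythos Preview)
Your proof is correct. Note that the paper itself does not prove this lemma but refers to \cite[Lemma 3.3]{bolbou2024}; your argument---reducing each integral condition in \eqref{equilibrium} to the corresponding pair-marginal, disintegrating to recover the barycentre identity, and then invoking Strassen together with a gluing along the common marginal $\rho$---is the natural route and is in all likelihood what appears in the cited reference.
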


 \subsection{The duality result}
 In \cite{legruyer2009} it was evidenced that a $C^1$ function $u$ satisfies the constraint $\mathrm{lip}(\nabla u) \leq 1$ if and only if the following 3-point inequality holds true,
\begin{align}
	\label{3-point_ineq}
	\big({u}(y)+ \pairing{\nabla {u}(y),z-y}\big) -  \big({u}(x)  +  \pairing{\nabla {u}(x),z-x} \big) \leq \frac{1}{2} \Big( \abs{z-x}^2 + \abs{z-y}^2\Big)  \qquad 
	\forall\,(x,y,z) \in \Rd,
\end{align}
see also Lemma 3.4 in \cite{bolbou2024} for an independent proof. Accordingly, 
for every $u$ admissible in \eqref{Zolotarev} and every  3-plan $\pi \in \Sigma(\mu,\nu)$, we find that,
\begin{align}
	\nonumber
	\int u \,d(\nu-\mu) &= \iiint \bigg(	\big({u}(y)+ \pairing{\nabla {u}(y),z-y}\big) -  \big({u}(x)  +  \pairing{\nabla {u}(x),z-x} \big)  \bigg) \pi(dxdydz) \\
	\label{chain}
	& \leq \iiint \frac{1}{2} \Bigl( \abs{z-x}^2 + \abs{z-y}^2 \Bigr) \, \pi(dxdydz).
\end{align}
The equality above follows by testing \eqref{equilibrium} with $\Phi = \Psi = \nabla u$, and also by the fact that  $\mu$ and $\nu$ are the first and, respectively, the second marginals of every feasible plan $\pi$. Taking the supremum of the left hand side with respect to admissible potentials $u$, we find that each 3-plan furnishes an upper bound on the second Zolotarev distance,
\begin{align*}
	 Z_2(\mu,\nu) \leq  \iiint \frac{1}{2} \Bigl( \abs{z-x}^2 + \abs{z-y}^2 \Bigr) \, \pi(dxdydz)  \qquad\quad \forall\, \pi\in \Sigma(\mu,\nu).
\end{align*}
Our new duality scheme \cite{bolbou2024}  shows that for at least one feasible 3-plan $\pi = \ov\pi$ the gap vanishes. As in the case of the classical first-order Kantorovich-Rubinstein duality $Z_1(\mu,\nu) = W_1(\mu,\nu)$, the proof of this fact goes well beyond the standard use of the convex duality tools. It leads through the optimal convex dominance problem briefly recapitulated in Remark \ref{stochastic-opt} below. One of the main techniques relies on the fact that convexification  preserves the semi-concavity.

For the details on the second-order variant of the Kantorovich-Rubinstein duality we refer to \cite{bolbou2024}. Below, we state the result along with the optimality criteria.

\begin{theorem}
    \label{BoBo}
    For any pair $\mu,\nu \in \mathcal{P}_2(\Rd)$ such that $[\mu] = [\nu]$, the following statements hold true:
    
  \noindent $(i)$ \  one has the equality,
        \begin{equation}\label{OT3}
            Z_2(\mu,\nu) =  \inf\ 
            \biggl\{  \iiint \frac{1}{2} \Bigl( \abs{z-x}^2 + \abs{z-y}^2 \Bigr) \, \pi(dxdydz)  \ :   \ \pi\in \Sigma(\mu,\nu)\biggr\},
        \end{equation} 
        where the infimum  is achieved   at least at one 3-plan  $\ov{\pi}$;
        
    \med
    $(ii)$\  the maximization problem \eqref{Zolotarev} has a solution $\ov{u}$;
       
       \med
       $(iii)$\ the 3-plan $\ov{\pi} \in \Sigma(\mu,\nu)$ and the function $\ov{u} \in C^{1,1}(\Rd)$ with $\mathrm{lip}(\nabla \ov{u}) \leq 1$ solve the problems  \eqref{OT3} and \eqref{Zolotarev}, respectively, if and only if the two following conditions are met:
        \begin{itemize}
            \item there exists a  transport plan $\ov\gamma \in \Gamma(\mu,\nu)$ such that $\ov{\pi}$ satisfies 
             \begin{equation}
             	\label{disintegrated}
                \ov\pi(dxdydx) = \ov\gamma(dxdy) \otimes \delta_{\ov{z}(x,y)}(dz),
            \end{equation}
            where
            \begin{equation}
                 z_{\ov u}(x,y) := \frac{x+y}{2} + \frac{\nabla \ov u(y)-\nabla \ov u(x)}{2};
            \end{equation}
            \item $\ov u$ satisfies the  two-point equality  for $\ov\gamma$-a.e. pair $(x,y)$:
            \begin{align}\label{2-eq}
            \ov{u}(y)-\ov{u}(x) = \frac{1}{2} \pairing{\nabla \ov{u}(x) + \nabla \ov{u}(y), y-x} - \frac{1}{4} |\nabla \ov{u}(x) - \nabla \ov{u}(y)|^2 \, + \, & \frac{1}{4} \abs{x-y}^2 .
                   \end{align}
        \end{itemize}
\end{theorem}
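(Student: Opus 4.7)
The proof splits naturally into three pieces. The inequality $Z_2(\mu,\nu) \leq \iiint \tfrac{1}{2}(|z-x|^2+|z-y|^2)\,\pi(dxdydz)$ for every feasible $\pi \in \Sigma(\mu,\nu)$ and every admissible $u$ is already contained in the chain \eqref{chain}: the first equality comes from testing the equilibrium relations \eqref{equilibrium} with $\Phi=\Psi=\nabla u$, and the second inequality is the 3-point inequality \eqref{3-point_ineq}. So the substantive parts are (a) upgrading this weak duality to an equality with attainment on both sides, and (b) extracting the structure of optimizers.

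\textbf{Strong duality (i) and existence (ii).} The primal $(\mathrm{OT}_3)$ is a linear minimization over the convex set $\Sigma(\mu,\nu)$ in $\mathcal{P}_2((\Rd)^3)$. I would apply Fenchel--Rockafellar duality with dual variables $\phi,\psi\in C_b(\Rd)$ (for the marginal constraints on $\mu,\nu$) and continuous vector fields $\Phi,\Psi$ (for the equilibrium conditions \eqref{equilibrium}). The resulting dual reads
\begin{equation*}
\sup\Big\{\!\int\!\phi\,d\mu+\!\int\!\psi\,d\nu\ :\ \phi(x)+\psi(y)+\langle\Phi(x),z\minus x\rangle+\langle\Psi(y),z\minus y\rangle\le \tfrac12(|z\minus x|^2+|z\minus y|^2)\ \forall x,y,z\Big\}.
\end{equation*}
The characterization of $\mathrm{lip}(\nabla u)\le 1$ through the 3-point inequality \eqref{3-point_ineq} shows that every $u\in C^{1,1}$ admissible in \eqref{Zolotarev} produces a dual-feasible quadruple $(-u,u,\nabla u,\nabla u)$; hence the dual value dominates $Z_2(\mu,\nu)$. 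The nontrivial reverse implication, namely that an arbitrary admissible $(\phi,\psi,\Phi,\Psi)$ can be reduced to this ``potential form'' without losing value, is precisely the point at which standard tools break down and \cite{bolbou2024} invokes the optimal convex-dominance problem together with the fact that convexification preserves semi-concavity. I would take this reduction as the black box inherited from \cite{bolbou2024}. Existence of $\ov\pi$ follows from weak-$*$ compactness: the marginal constraints tighten the $(x,y)$-part, while the quadratic cost forces tightness of the $z$-marginal, and both the cost and $\Sigma(\mu,\nu)$ are lower semicontinuous/closed. For existence of $\ov u$ in (ii), one uses the freedom to impose $u(x_0)=0$ and $\nabla u(x_0)=0$ (harmless because $[\mu]=[\nu]$), which, combined with $\mathrm{lip}(\nabla u)\le 1$, yields the quadratic bound $|u(x)|\le \tfrac12|x-x_0|^2$ and the linear bound on $\nabla u$; Arzel\`a--Ascoli then extracts a locally $C^1$ limit of a maximizing sequence, with $\PP_2$ integrability allowing passage to the limit in $\int u\, d(\nu-\mu)$.

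\textbf{Optimality criteria (iii).} Fix optimal $\ov u,\ov\pi$. Equality throughout \eqref{chain} forces 3-point equality $\ov\pi$-a.e. For fixed $(x,y)$, the defect
\begin{equation*}
F_{x,y}(z)\ :=\ \tfrac12(|z-x|^2+|z-y|^2)-\big[\ov u(y)-\ov u(x)+\langle\nabla\ov u(y),z-y\rangle-\langle\nabla\ov u(x),z-x\rangle\big]
\end{equation*}
is a strictly convex quadratic in $z$ with Hessian $2I$, minimized uniquely at $z=z_{\ov u}(x,y)$; moreover $F_{x,y}(z_{\ov u}(x,y))\ge 0$ is \emph{equivalent} to the two-point inequality obtained from \eqref{2-eq} with $=$ replaced by $\le$ (an easy algebraic check after substitution). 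Hence $F_{x,y}(z)=0$ forces simultaneously $z=z_{\ov u}(x,y)$ and the two-point equality \eqref{2-eq}. Disintegrating $\ov\pi$ with respect to its $(x,y)$-marginal $\ov\gamma:=\Pi_{1,2}^{\#}\ov\pi\in\Gamma(\mu,\nu)$ yields the representation \eqref{disintegrated}. Conversely, starting from the two bullets of (iii), I would verify optimality by recomputing $\int\ov u\,d(\nu-\mu)$: testing \eqref{equilibrium} with $\Phi=\Psi=\nabla\ov u$ gives the first line of \eqref{chain}, collapsing $z$ to $z_{\ov u}(x,y)$ and invoking \eqref{2-eq} yields $\int\ov u\,d(\nu-\mu)=\iint \tfrac14(|y-x|^2+|\nabla\ov u(y)-\nabla\ov u(x)|^2)\,\ov\gamma(dxdy)$, which coincides with the primal cost after plugging $z=z_{\ov u}$ into $\tfrac12(|z-x|^2+|z-y|^2)$. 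The weak duality bound then closes the loop.

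\textbf{Main obstacle.} Everything above is routine convex analysis \emph{except} the reduction of the abstract dual $(\phi,\psi,\Phi,\Psi)$ to the Zolotarev potential class $(-u,u,\nabla u,\nabla u)$; this is the genuine novelty of \cite{bolbou2024} and the only place where the standard Kantorovich--Rubinstein machinery is insufficient. Without that step, one only obtains the weak-duality inequality and the optimality criteria of (iii) as a \emph{sufficient} condition, not a necessary one.
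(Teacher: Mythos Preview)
Your proposal is correct and follows essentially the same approach as the paper. For (i) and (ii) the paper simply imports the results from \cite{bolbou2024} without reproducing any argument, whereas you sketch the Fenchel--Rockafellar setup and the compactness/Arzel\`a--Ascoli existence steps while (correctly) isolating the reduction of the abstract dual $(\phi,\psi,\Phi,\Psi)$ to potential form as the genuine black box from \cite{bolbou2024}; your treatment of (iii) via the strictly convex defect $F_{x,y}(z)$ with unique minimizer $z_{\ov u}(x,y)$, and the identification of its minimum value with the two-point equality \eqref{2-eq}, is exactly the short self-contained argument the paper gives.
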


\begin{proof} The first two statements are a rephrasing of the assertion $(i)$ of  \cite[Theorem 1.1]{bolbou2024}. Furthermore, the optimality conditions appearing in our  assertion $(iii)$ can be  deduced directly from  the assertion $(ii)$ of the latter theorem and of the subsequent Corollary 1.2 in the same paper \cite{bolbou2024}.
Nonetheless, with the equality \eqref{OT3} at hand, a short proof of the conditions $(iii)$ can be devised without further reference to \cite{bolbou2024}, which we will now do for the reader's convenience.

From $(i)$ we see that the optimality of an admissible pair $(\ov{u},\ov{\pi})$ is equivalent to the inequality in \eqref{chain} being an equality. In turn, this is equivalent to the equality sign in the inequality \eqref{3-point_ineq} for every triple $(x,y,z)$ in the support of $\ov\pi$. It is straightforward to check that, for fixed $x,y$, the point $z = z_{\ov{u}}(x,y)$ is the unique minimizer of the gap in the inequality \eqref{3-point_ineq}. Therefore, the disintegrated form of $\ov\pi$ in \eqref{disintegrated} is a necessary condition for the optimality. Once this form is enforced, we must make sure that the inequality \eqref{3-point_ineq}, with $z$ substituted by $z_{\ov u}(x,y)$, is an equality for all  $(x,y) \in \spt\ov\gamma$. After elementary rearrangements, this equality can be identified as \eqref{2-eq}. We have thus showed that the conditions $(iii)$ are necessary, and similarly we deduce that they are also sufficient.
\end{proof}

\begin{remark} \label{stochastic-opt} Thanks to the characterization \eqref{rho-admi} of the third marginals associated with  admissible 3-plans, one infers that the optimal transport problem \eqref{OT3} can be reduced to a stochastic optimization problem under convex dominance constraints. Indeed, by developing the three-point cost and  testing  \eqref{equilibrium} with the identity maps, one deduces  the following equality,
$$ Z_2(\mu,\nu) \, =\, \V(\mu,\nu) - \frac1{2} \big(\var (\mu) +\var (\nu) \big), \qquad \V(\mu,\nu):= \inf \Big\{ \var (\rho) \, :\, \rho  \succeq_c \mu, \ \  \rho  \succeq_c \nu \Big\}.$$
In turn, the problem  $\V(\mu,\nu)$ above admits optimal solutions $\ov\rho$ from which optimal 3-plans $\ov\pi$ can be constructed by gluing \textit{any} martingale transports in $\Gamma_{\mathrm{M}}(\mu,\ov\rho)$ and $\Gamma_{\mathrm{M}}(\nu,\ov\rho)$.
For further details, we refer to \cite{bolbou2024}. 
Note that if $\nu  \succeq_c \mu$, the unique minimizer for $\V(\mu,\nu)$ is trivially  $\ov\rho=\nu$, while a solution to \eqref{Zolotarev} is given by $\ov u= \frac1{2} |x|^2$. In particular, we have $Z_2(\mu,\nu) = \frac1{2} \big(\var (\nu) -\var (\mu) \big)$ in this case.
 
\end{remark}

\subsection{An auxiliary identity}  We will now derive an alternative expression for the Zolotarev-2 distance $Z_2(\mu,\nu)$ which was not presented in the paper \cite{bolbou2024}. This expression is crucial for establishing the optimal upper bound
 \eqref{new-upper} announced in the introduction.  
 

\begin{lemma}
    \label{magic_formula}
    Let $\ov{u}$ be a solution of the problem $Z_2(\mu,\nu)$, see \eqref{Zolotarev}. Then, the following formula holds true,
    \begin{align}\label{magic=}
        Z_2(\mu,\nu) &= \int \frac{1}{2} \pairing{x,\nabla \ov{u}(x)} \, (\nu-\mu)(dx).
    \end{align}
\end{lemma}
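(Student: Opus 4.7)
My plan is to derive the identity as a first-order optimality condition for $\ov u$, exploiting a homogeneity of the admissibility constraint in \eqref{Zolotarev} under isotropic rescaling of the argument. Precisely, for each real $\epsilon > -1$ I would introduce the competitor
$$u_\epsilon(x) \ := \ (1+\epsilon)^{-2}\,\ov u\bigl((1+\epsilon)x\bigr).$$
A chain-rule computation gives $\nabla u_\epsilon(x) = (1+\epsilon)^{-1}\,\nabla \ov u((1+\epsilon)x)$, from which $\Lip(\nabla u_\epsilon) \le \Lip(\nabla \ov u) \le 1$, so $u_\epsilon$ is admissible. Consequently the scalar map $f(\epsilon) := \int u_\epsilon \, d(\nu-\mu)$ is well-defined on $(-1,+\infty)$ and attains its maximum at $\epsilon=0$, with $f(0) = Z_2(\mu,\nu)$.

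Granting differentiability at $\epsilon=0$, a direct computation yields $\partial_\epsilon u_\epsilon(x)\big|_{\epsilon=0} = \pairing{x,\nabla \ov u(x)} - 2\,\ov u(x)$. The first-order condition $f'(0)=0$ then reads
$$\int \pairing{x,\nabla \ov u(x)}\, (\nu-\mu)(dx) \ = \ 2 \int \ov u \, d(\nu-\mu) \ = \ 2\, Z_2(\mu,\nu),$$
which is precisely \eqref{magic=}.

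The only technical point I expect to be delicate is the justification of differentiating under the integral sign. The $C^{1,1}$ regularity of $\ov u$, combined with $\Lip(\nabla \ov u) \le 1$, gives the linear growth bound $|\nabla \ov u(x)| \le |\nabla \ov u(0)| + |x|$ and the quadratic growth bound $|\ov u(x)| \le |\ov u(0)| + |\nabla \ov u(0)|\,|x| + \tfrac{1}{2}|x|^2$. For $|\epsilon| \le \tfrac{1}{2}$ these translate into a uniform majorant $|\partial_\epsilon u_\epsilon(x)| \le C(1+|x|^2)$; since $\mu,\nu \in \PP_2(\Rd)$ the function $1+|x|^2$ is integrable against $\mu+\nu$, and dominated convergence legitimises the derivative computation, closing the argument. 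Note that the existence of the maximizer $\ov u$ used throughout is guaranteed by Theorem~\ref{BoBo}$(ii)$, and that the hypothesis $[\mu]=[\nu]$ is implicitly in force since otherwise $Z_2(\mu,\nu) = +\infty$ and the statement is vacuous.
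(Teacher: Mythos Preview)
Your argument is correct and takes a genuinely different route from the paper's proof. The paper proves the identity by exploiting the full three-marginal duality of Theorem~\ref{BoBo}: it takes an optimal 3-plan $\ov\pi$ in its disintegrated form \eqref{disintegrated}, expresses $Z_2(\mu,\nu)$ as the three-point transport cost, and then repeatedly applies the martingale conditions \eqref{mart1}--\eqref{mart2} with the test functions $\Phi(x)=x$, $\Psi(y)=y$ and $\Phi=\Psi=\nabla\ov u$ to reduce the expression to the desired integral against $\nu-\mu$. Your approach, by contrast, is a one-parameter variational argument on the primal side: the dilation $u_\epsilon(x)=(1+\epsilon)^{-2}\ov u((1+\epsilon)x)$ preserves the admissibility constraint $\Lip(\nabla u)\le 1$, so optimality of $\ov u$ forces the first-order condition $f'(0)=0$, which is exactly \eqref{magic=}. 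The domination bound you sketch is correct and makes the differentiation under the integral rigorous.

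What each approach buys: your proof is more elementary and self-contained, using only the existence of a maximizer (Theorem~\ref{BoBo}$(ii)$) rather than the structure of optimal 3-plans and the martingale constraints; it also makes transparent that \eqref{magic=} is precisely the Euler--Lagrange condition associated with the scaling invariance of the constraint set. The paper's proof, on the other hand, ties the identity directly to the dual picture and the optimal $\ov\gamma$, which is the language in which the upper-bound argument of Theorem~\ref{thm_upper} is subsequently developed.
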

\begin{proof}
    Assume a solution $\ov{\pi} \in \Sigma(\mu,\nu)$. Owing to Theorem  \ref{BoBo}, it is of the form $\ov{\pi}(dxdydz) = \ov\gamma(dxdy) \otimes \delta_{\ov{z}(x,y)}(dz)$, where $\ov\gamma \in \Gamma(\mu,\nu)$ and $\ov{z}(x,y) = \frac{x+y}{2} + \frac{\nabla \ov u(y)-\nabla \ov u(x)}{2}$. We get,
    \begin{align}
        Z_2(\mu,\nu) &= \iiint \frac{1}{2} \Bigl( \abs{z-x}^2 + \abs{z-y}^2 \Bigr) \, \ov\pi(dxdydz)  \\
        & = \iint \frac{1}{2} \Bigl( \abs{\ov{z}(x,y)-x}^2 + \abs{\ov{z}(x,y)-y}^2 \Bigr) \, \ov\gamma(dxdy).
    \end{align}
    Thanks to the martingale conditions defining the set $\Sigma(\mu,\nu)$, for any Borel function $\Phi:\Rd \to \Rd$ of linear growth it holds that,
    \begin{equation} \label{mart1}
        0 = \iiint  \pairing{z-x,\Phi(x)} \, \ov\pi(dxdydz) = \iint  \pairing{\ov{z}(x,y)-x,\Phi(x)} \, \ov\gamma(dxdy),
    \end{equation}
    and similarly we get,
    \begin{equation} \label{mart2}
       \iint  \pairing{\ov{z}(x,y)-y,\Psi(y)} \, \ov\gamma(dxdy)=0.
    \end{equation}
  We then deduce the following chain of equalities,
    \begin{align*}
    	Z_2(\mu,\nu) & =  \iint \frac{1}{2} \Bigl( \pairing{\ov{z}(x,y)-x,\ov{z}(x,y)-x} + \pairing{\ov{z}(x,y)-y,\ov{z}(x,y)-y} \Bigr) \, \ov\gamma(dxdy) \\
    	& =  \iint \frac{1}{2} \Bigl( \pairing{\ov{z}(x,y)-x,\ov{z}(x,y)} + \pairing{\ov{z}(x,y)-y,\ov{z}(x,y)} \Bigr) \, \ov\gamma(dxdy) \\
    	&= \iint \Big\langle \ov{z}(x,y) - \frac{x+y}{2}, \ov{z}(x,y) \Big\rangle  \, \ov\gamma(dxdy) = \iint \Big\langle \frac{ \nabla u(y) - \nabla u(x)}{2}, \ov{z}(x,y) \Big\rangle  \, \ov\gamma(dxdy) \\
    	& = \frac{1}{2 } \iint \big\langle \nabla u(y), \ov{z}(x,y) \big\rangle  \, \ov\gamma(dxdy)  - \frac{1}{2 } \iint \big\langle \nabla u(x), \ov{z}(x,y) \big\rangle  \, \ov\gamma(dxdy) \\
    	& =  \frac{1}{2 } \iint \big\langle \nabla u(y),x \big\rangle  \, \ov\gamma(dxdy)  - \frac{1}{2 } \iint \big\langle \nabla u(x), y \big\rangle  \, \ov\gamma(dxdy) ,
    \end{align*}
    where:
    \begin{enumerate}
    \item[-] to pass from the second to the third line, we used
     \eqref{mart1},\,\eqref{mart2}  with $\Phi(x) = x$, $\Psi(y) = y$;
     \item[-] to pass to the last line, we used \eqref{mart1},\,\eqref{mart2}  with $\Phi(x) = \nabla u(x)$ and $\Psi(y) = \nabla u(y)$.
       \end{enumerate} 
    The asserted equality \eqref{magic=} now follows since $\ov\gamma \in \Gamma(\mu,\nu)$.
\end{proof}


\section{The lower bound inequality}

We  now present a sharp extension of Rio inequality to $\R^d$. This inequality  involves the ratio  $\dis \tfrac{Z_2(\mu,\nu)}{W_2^2(\mu,\nu)}$,
which is  dilation-invariant since the Wassertein distance enters with a square.
\begin{theorem}    \label{main}
    Let  $\mu,\nu$ be two probability measures in  $\mathcal{P}_2(\Rd)$. Then,
	\begin{equation}
		\label{lower_bound}
		\frac{1}{4} W_2^2(\mu,\nu)  \leq  Z_2(\mu,\nu),
	\end{equation}
	and the constant $\frac{1}{4}$  cannot be improved. Moreover, the inequality is an equality if and only if $\mu = \nu$. 
\end{theorem}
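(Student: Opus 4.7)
The plan is to exploit the duality Theorem \ref{BoBo} together with an elementary midpoint inequality. The key observation is that any 3-plan feasible for the right-hand side of \eqref{OT3} automatically produces a transport plan admissible for $W_2^2$, and its three-point cost dominates the two-point Wasserstein cost up to a factor $4$.

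If $[\mu]\neq[\nu]$, Remark \ref{finiteZ} yields $Z_2(\mu,\nu)=+\infty$, so \eqref{lower_bound} holds trivially and strictly. Assume henceforth $[\mu]=[\nu]$, and let $\bar\pi\in\Sigma(\mu,\nu)$ be an optimal $3$-plan for \eqref{OT3}, whose existence is guaranteed by Theorem \ref{BoBo}(i). The $(x,y)$-marginal $\bar\gamma:=\Pi_{1,2}^{\#}\bar\pi$ lies in $\Gamma(\mu,\nu)$, hence is admissible for the quadratic Monge-Kantorovich problem, so
\[
W_2^2(\mu,\nu)\ \leq\ \iint|x-y|^2\,\bar\gamma(dxdy)\ =\ \iiint|x-y|^2\,\bar\pi(dxdydz).
\]
Combining this with the elementary pointwise identity
\[
|x-y|^2\ =\ 2\bigl(|x-z|^2+|z-y|^2\bigr)-|x+y-2z|^2\ \leq\ 2\bigl(|x-z|^2+|z-y|^2\bigr),
\]
which holds with equality if and only if $z=(x+y)/2$, and invoking Theorem \ref{BoBo}(i) once more, one obtains $W_2^2(\mu,\nu)\leq 4\,Z_2(\mu,\nu)$, i.e.\ \eqref{lower_bound}.

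For the equality case, suppose $Z_2(\mu,\nu)=\tfrac{1}{4}W_2^2(\mu,\nu)$. Then both inequalities of the previous display are saturated, which forces $z=(x+y)/2$ for $\bar\pi$-a.e.\ $(x,y,z)$. Testing the martingale identities \eqref{equilibrium} with the linear functions $\Phi(x)=x$ and $\Psi(y)=y$ (admissible by the density argument following \eqref{equilibrium}) and substituting $z=(x+y)/2$, one derives
\[
\iint\pairing{x,y}\,\bar\gamma(dxdy)\ =\ \int|x|^2\,\mu(dx)\ =\ \int|y|^2\,\nu(dy).
\]
It follows by expanding the square that $\iint|x-y|^2\,\bar\gamma(dxdy)=0$, so $\bar\gamma$ is concentrated on the diagonal $\{x=y\}$ and consequently $\mu=\nu$.

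The sharpness of the constant $\tfrac14$ is established by exhibiting, in the spirit of the one-dimensional construction of \cite{rio2009}, a sequence $(\mu_n,\nu_n)\in\PP_2(\R^d)^2$ along which the corresponding optimal $3$-plans have their $z$-coordinate asymptotically concentrating on the midpoint $(x+y)/2$ while the $(x,y)$-marginal approximates an optimal Wasserstein coupling, so that the ratio $Z_2(\mu_n,\nu_n)/W_2^2(\mu_n,\nu_n)$ converges to $1/4$. The main technical hurdle is the rigidity analysis in the equality case: combining the midpoint condition $z=(x+y)/2$ with both martingale constraints in \eqref{equilibrium} and the barycentre identity $[\mu]=[\nu]$ via a judicious choice of test functions, and concluding through a variance computation that $\bar\gamma$ is carried by the diagonal.
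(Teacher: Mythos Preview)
Your argument for the inequality and for the equality case is essentially identical to the paper's: the same pointwise midpoint inequality, the same projection $\bar\gamma=\Pi_{1,2}^{\#}\bar\pi$, and the same choice of test functions $\Phi(x)=x$, $\Psi(y)=y$ in \eqref{equilibrium} to force $\iint|x-y|^2\,\bar\gamma=0$ (the paper adds the two identities directly, you compute the cross-term first and expand, but this is cosmetic).

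The one genuine gap is the sharpness claim. You assert that a sequence $(\mu_n,\nu_n)$ can be exhibited ``in the spirit of'' \cite{rio2009} with the ratio tending to $1/4$, but you do not construct it, and the heuristic you give (optimal $3$-plans whose $z$-coordinate concentrates on the midpoint) is precisely what the equality analysis shows \emph{cannot} happen exactly unless $\mu=\nu$. So one needs an explicit family where the defect goes to zero at the right rate. The paper supplies this in Example~\ref{opt1/4}: two-point measures $\mu_{a,b},\nu_{a,b}$ on the line with an explicit admissible $3$-plan $\bar\pi$, for which the ratio is bounded above by $b/(2(a+b))\to 1/4$ as $b\searrow a$. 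Without such a concrete construction (or an equivalent one), the optimality of $1/4$ remains unproved in your proposal.
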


Let us remark that the inequality \eqref{lower_bound} is trivially true if $[\mu]\not=[\nu]$. Indeed, in this case the supremum in  
\eqref{Zolotarev} is infinite and, by convention, $Z_2(\mu,\nu)=+\infty.$

\begin{proof}[Proof of the lower bound] 
    It is enough to observe that for any triple $(x,y,z) \in (\Rd)^3$ we have,
    \begin{equation}
        \frac{1}{2} \Bigl( \abs{z-x}^2 + \abs{z-y}^2 \Bigr) \geq \frac{1}{4} \abs{x-y}^2,
    \end{equation}
    while the equality holds if and only if $z=\frac{x+y}{2}$. Let  $\ov\pi$ be a solution of the three-point optimal transport formulation \eqref{OT3} and denote $\ov\gamma := \Pi_{1,2}^\# \ov\pi$. Then, by the assertion (i) of Theorem \ref{BoBo},
    \begin{align}
    	\label{chain_lower}
        Z_2(\mu,\nu) = \iiint \frac{1}{2} \Bigl( \abs{z-x}^2 + \abs{z-y}^2 \Bigr) \, \ov\pi(dxdydz) &\geq \frac{1}{4} \iiint \abs{x-y}^2 \, \ov\pi(dxdydz) \\
        \nonumber
        & = \frac{1}{4}\iint \abs{x-y}^2 \, \ov\gamma(dxdy) \\
        \nonumber
        & \geq \frac{1}{4} \,W_2^2(\mu,\nu).
    \end{align}
    To obtain the last inequality, we used the admissibility condition $\ov\pi \in \Sigma(\mu,\nu)$ which entails that $\ov\gamma \in \Gamma(\mu,\nu)$. 
    The optimality of the constant $\frac{1}{4}$ follows from  the Example \ref{opt1/4} below.
    
    Let us now pass to examining the criteria for \eqref{lower_bound} being an equality. Clearly the condition $\mu = \nu$ is sufficient. To show the necessity, we observe that the equality implies that the first inequality in \eqref{chain_lower} is an equality. In turn, this means that,
    \begin{equation}
    	\ov{\pi}(dxdydz) = \ov{\gamma}(dxdy) \otimes \delta_{\frac{x+y}{2}} (dz).
    \end{equation}
    The two martingale conditions \eqref{equilibrium} defining the set $\Sigma(\mu,\nu)$ furnish the relations,
    \begin{align*}
    	&0 = \iiint \pairing{\Phi(x),z-x} \,\ov\pi(dxdydz) = \frac{1}{2} \iint  \pairing{\Phi(x),y-x} \, \ov{\gamma}(dxdy), \\
    	&0 = \iiint \pairing{\Psi(y),z-y} \,\ov\pi(dxdydz) = \frac{1}{2} \iint  \pairing{\Psi(y),x-y} \, \ov{\gamma}(dxdy),
    \end{align*}
    where   $\Phi,\Psi: \Rd \to \Rd$ are any Borel maps of linear growth.
    By  adding the two lines above, after taking $\Phi(x) = x$ and $\Psi(y)=y$, we infer that $ \iint |y-x|^2 \, \ov{\gamma}(dxdy) =0$. 
    Therefore $\ov \gamma$ must be supported on the diagonal $\Delta=\{(x,x) : x\in\Rd\}$. It is thus straightforward that its marginals $\mu$ and $\nu$ must coincide. 
    The proof is complete.
\end{proof}

\begin{example}
    \label{opt1/4}
    For $b>a>0$ consider the two centred probabilities on the real line:
    \begin{equation*}
        \mu_{a,b} = \frac{b}{a+b}\, \delta_{-a} + \frac{a}{a+b} \,\delta_b, \qquad \nu_{a,b} = \frac{a}{a+b} \,\delta_{-b} + \frac{b}{a+b} \, \delta_a.
    \end{equation*}
    The unique transport plan with a monotone support reads,
    \begin{equation*}
        \gamma_{\mathrm{mon}} = \frac{a}{a+b} \, \delta_{(-a,-b)} + \frac{b-a}{a+b}\, \delta_{(-a,a)} + \frac{a}{a+b} \, \delta_{(b,a)}.
    \end{equation*}
    It is straightforward to check that $\Pi_1^\# \gamma_{\mathrm{mon}} = \mu_{a,b}$ and $\Pi_2^\# \gamma_{\mathrm{mon}} = \nu_{a,b}$.
    Thus, the square of the Wasserstein distance equals,
    \begin{align*}
        W_2^2(\mu_{a,b},\nu_{a,b}) &= \iint \abs{x-y}^2 \gamma_{\mathrm{mon}}(dxdy) \\
        &=(b-a)^2 \frac{a}{a+b} + (2a)^2 \frac{b-a}{a+b} + (b-a)^2 \frac{a}{a+b} = 2a (b-a).
    \end{align*}
    In turn, let us propose an admissible 3-plan $\ov\pi \in \Sigma(\mu_{a,b},\nu_{a,b})$,
    \begin{equation*}
        \ov\pi = \frac{a}{a+b} \, \delta_{(-a,-b,-b)} + \frac{b-a}{a+b} \, \delta_{(-a,a,0)} + \frac{a}{a+b} \, \delta_{(b,a,b)}.
    \end{equation*}
    Since $\Pi_{1,2}^\# \ov\pi = \gamma_{\mathrm{mon}}$, to show the admissibility of $\ov\pi$ it remains to check the martingale conditions. We compute,
    \begin{align*}
        \Pi_{1,3}^\# \ov\pi &= \frac{a}{a+b} \, \delta_{(-a,-b)} + \frac{b-a}{a+b} \, \delta_{(-a,0)} + \frac{a}{a+b} \, \delta_{(b,b)} \\
        & = \frac{b}{a+b} \, \delta_{-a} \otimes  \bigg( \frac{a}{b} \,\delta_{-b} + \frac{b-a}{b} \, \delta_{0} \bigg) +  \frac{a}{a+b} \, \delta_{b} \otimes \delta_b.
    \end{align*}
    Since the probability $\frac{a}{b} \,\delta_{-b} + \frac{b-a}{b} \, \delta_{0}$ has $-a$ as its barycentre, it is easy to see that the above plan is a martingale. Similarly, one shows that $\Pi_{2,3}^\# \ov\pi$ is a martingale as well. We can readily compute an upper bound for the Zolotarev distance,
    \begin{align*}
        Z_2(\mu_{a,b},\nu_{a,b}) &\leq \iiint \frac{1}{2} \Bigl( \abs{z-x}^2 + \abs{z-y}^2 \Bigr) \, \ov\pi(dxdydz) \\
        &=  \frac{1}{2} (-b+a)^2 \frac{a}{a+b} + \frac{1}{2} \big(a^2 + a^2\big) \,\frac{b-a}{a+b} +  \frac{1}{2} (b-a)^2 \frac{a}{a+b} = \frac{ab}{a+b}\,(b-a).
    \end{align*}
    With $a>0$ fixed, we look at the asymptotic quotient when $b$ converges to $a$ from above, 
    \begin{equation*}
        \limsup_{b \,\searrow \, a} \frac{Z_2(\mu_{a,b},\nu_{a,b})}{W_2^2(\mu_{a,b},\nu_{a,b})} \leq \lim_{b \,\searrow \, a}\frac{b}{2(a+b)} = \frac{1}{4}.
    \end{equation*}
    This establishes the optimality of the constant $\frac{1}{4}$ in the lower bound in Theorem \ref{main} in dimension $d=1$. Clearly this example can be embedded in $\R^d$
    by taking pairs of symmetric  points on a straight line.  Hence it holds for every $d\ge 1$ that,
     $$\inf \left\{\frac{Z_2(\mu,\nu) }{W_2^2(\mu,\nu)}\, :\, \mu,\nu \in \PP_2(\R^d) \ ,\ \mu\not=\nu\right\}= \frac1{4}.$$ 
\end{example}

\begin{remark}\label{noreverse}  There does not exist a reverse  inequality of the kind $Z_2(\mu,\nu) \le C\, W_2^2(\mu,\nu)$ holding for pairs $(\mu,\nu)$ such that
$[\mu]=[\nu]$. This can be checked by showing that the ratio $\frac{Z_2(\mu,\nu_n) }{W^2_2(\mu,\nu_n)}$ blows up when $\mu:= \frac1{2} (\delta_1+ \delta_{-1})$ and $\nu_n:= \frac1{2} (\delta_{1+1/n}+ \delta_{-1-1/n})$.
Indeed, in that case, we have $W^2_2(\mu,\nu_n)=1/n^2$, while, in view of  Remark \ref{stochastic-opt}, it holds that $Z_2(\mu,\nu_n)=\frac1{2} \big(\var (\nu_n) - \var(\mu)\big)= \frac1{2} \big((1+1/n)^2-1\big)$, which goes to zero at the rate $1/n$. This shows that a reverse inequality 
	cannot be true even if we confine ourselves to probability measures with supports contained in a fixed compact set $K \subset \Rd$. Another counter-example, but with non-compact supports, is given below for the Gaussian laws.

\end{remark}

    \begin{example}
    \label{opt1/2}
    For two positive numbers $\sigma_1 \leq \sigma_2$ consider two centred Gaussians on the real line with standard deviations equal to $\sigma_1$ and $\sigma_2$, respectively, that is $\mathcal{N}(0,\sigma^2_1)$, $\mathcal{N}(0,\sigma^2_2)$.
    The Wasserstein-2 distance between two Gaussian distributions admits a closed form in any dimension $d$, see e.g. \cite{dowson1982}. In the simple case of two 1D centered Gaussians, the formula reduces to,
    \begin{equation}
        W_2\Big(\mathcal{N}(0,\sigma^2_1) \, ,\, \mathcal{N}(0,\sigma^2_2)\Big) = \abs{\sigma_1-\sigma_2} = \sigma_2 -\sigma_1.
    \end{equation}
    It is also well known that every pair of centred Gaussians  in 1D is in convex order. More precisely, in case when $\sigma_1 \leq \sigma_2$, we have,
    \begin{equation*}
        \mathcal{N}(0,\sigma^2_1) \, \preceq_c \, \mathcal{N}(0,\sigma^2_2).
    \end{equation*}
    Meanwhile, for any  measures in convex order $\mu \preceq_c \nu$, the Zolotarev distance equals $Z_2(\mu,\nu) = \frac{1}{2} \big( \var(\nu) - \var(\mu) \big)$, cf. \cite[Section 4.1]{bolbou2024}. Accordingly, for the Gaussian measures we have,
    \begin{equation}
        Z_2\Big(\mathcal{N}(0,\sigma^2_1) \, ,\, \mathcal{N}(0,\sigma^2_2)\Big) = \frac{\sigma_2^2 - \sigma_1^2}{2}. 
    \end{equation}
  By considering  $\mu:=\mathcal{N}(0,1)$ and $\nu_n:=\mathcal{N}(0,1+ \frac1{n})$,  we arrive at: 
        \begin{equation*}
        \frac{ Z_2(\mu,\nu_n)}{ W_2^2(\mu,\nu_n)}\,  =\, \frac{2n +1}{2}\, \to \, +\infty .
    \end{equation*}
\end{example}

\vskip1cm

\section{Upper bound inequalities}

From now on, we consider pairs $\mu,\nu \in \mathcal{P}_2(\Rd)$ such that $[\mu] = [\nu]$. Our goal is to derive an upper bound for the ratio $\frac{ Z_2(\mu,\nu)}{ W_2(\mu,\nu)}$
as a function of the variances of $\mu$ and $\nu$. More precisely, we want to establish an explicit formula for the function $h(a,b): \R_+^2 \to \R_+$ given by,
\begin{equation}\label{h-function}
h(a,b) : = \sup  \left\{ \frac{ Z_2(\mu,\nu)}{ W_2(\mu,\nu)} \, :\,   \sigma_\mu \le a, \ \sigma_\nu\le b, \ \mu \neq \nu , \  [\mu]=[\nu]=0 \right\},
\end{equation}
where $\sigma_\mu = \sqrt{\var(\mu)}$, $\sigma_\nu = \sqrt{\var(\nu)}$ are the standard deviations.
 By acknowledging how the two distances scale under dilations $T_\lambda(x) =\lambda x$, it is straightforward to check the homogeneity property  $h(\lambda a, \lambda b) = \lambda \, h(a,b)$ holding for every $\lambda>0$.
On the other hand, if $a=0$ (thus $\mu=\delta_0$), we get $Z_2(\mu,\nu) = \frac1{2} \sigma_\nu^2 = \frac1{2} b^2$, while $W_2(\mu,\nu)= \sigma_\nu =  b$. Hence $h(0,b)=h(b,0)= \frac{b}{2}$.
We are going to show that,  in fact, $h$ is linear, i.e. $h(a,b)= \frac1{2} (a+b)$. This will give the optimal upper bound stated below.

\begin{theorem}
	\label{thm_upper}
	Assume that the two probability distributions $\mu,\nu \in \mathcal{P}_2(\Rd)$ share the barycentre, i.e. $[\mu] = [\nu]$. Then, we have,
	\begin{equation}
		\label{upper_bound}
		 Z_2(\mu,\nu)  \leq \frac{1}{2}  (\sigma_\mu + \sigma_\nu)\, W_2(\mu,\nu) ,
	\end{equation}
	and this upper bound is optimal in the sense that  $h(a,b)= \frac1{2}(a+b)$ in \eqref{h-function}.
	
	Furthermore, the inequality becomes an equality if and only if  the probability measures coincide up to a dilation centred at their common barycentre.
\end{theorem}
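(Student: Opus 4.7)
The plan is to combine the auxiliary identity of Lemma \ref{magic_formula} with a Cauchy--Schwarz bound along an optimal $W_2$-transport plan. First I would reduce to the centred case $[\mu]=[\nu]=0$ by translation invariance of $Z_2$, $W_2$, and the variances. Let $\ov u$ attain the supremum in \eqref{Zolotarev} and let $\ov\gamma \in \Gamma(\mu,\nu)$ be an optimal plan for $W_2(\mu,\nu)$. Since $[\mu]=[\nu]$, subtracting any constant $c \in \Rd$ from $\nabla \ov u$ leaves the identity in Lemma \ref{magic_formula} unchanged; I choose $c := \int \nabla \ov u\,d\mu$, so that $w(x) := \nabla \ov u(x) - c$ has $\int w\,d\mu = 0$ while retaining the 1-Lipschitz property. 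Rewriting $Z_2(\mu,\nu)$ as an integral against $\ov\gamma$ and using the telescoping
\[
\langle y, w(y)\rangle - \langle x, w(x)\rangle = \langle y-x, w(x)\rangle + \langle y, w(y) - w(x)\rangle,
\]
splits $Z_2(\mu,\nu)$ into two double integrals, to each of which I would apply Cauchy--Schwarz.

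For the first piece, the identity
\[
\int |w|^2\,d\mu = \var_\mu(\nabla \ov u) = \tfrac{1}{2}\iint|\nabla \ov u(x) - \nabla \ov u(x')|^2\,\mu(dx)\mu(dx') \le \sigma_\mu^2,
\]
(the last bound using $\Lip(\nabla \ov u) \le 1$) yields a bound of $\tfrac{1}{2}\sigma_\mu\, W_2(\mu,\nu)$. For the second, the pointwise estimate $|w(y)-w(x)| \le |y-x|$ together with $\int |y|^2\,d\nu = \sigma_\nu^2$ yields $\tfrac{1}{2}\sigma_\nu\, W_2(\mu,\nu)$, and summing gives \eqref{upper_bound}. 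For the sharpness claim $h(a,b) = \tfrac{1}{2}(a+b)$, I would take any centred $\mu$ with $\sigma_\mu = a$ and set $\nu := T_\lambda^{\#}\mu$ with $T_\lambda(x) = \lambda x$ and $\lambda = b/a \ge 1$. Then $\sigma_\nu = b$, Brenier's theorem gives $W_2(\mu,\nu) = (\lambda-1)\sigma_\mu$, and because $\nu \succeq_c \mu$, Remark \ref{stochastic-opt} yields $Z_2(\mu,\nu) = \tfrac{1}{2}(\var\nu - \var\mu) = \tfrac{1}{2}(\sigma_\mu+\sigma_\nu)W_2(\mu,\nu)$, so this pair attains the upper bound.

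The main obstacle is the equality case. Tracing equality through the two Cauchy--Schwarz steps forces, on one hand, $|\nabla \ov u(x) - \nabla \ov u(x')| = |x-x'|$ for $\mu\otimes\mu$-a.e.\ $(x,x')$ together with the analogous $\nu\otimes\nu$-statement, which combined with $\Lip(\nabla \ov u) \le 1$ makes $\nabla \ov u$ act as a symmetric orthogonal affine map on $\spt\mu \cup \spt\nu$; on the other hand, proportionality of $y-x$ with $w(x)$ (and symmetrically with $w(y)$) on $\spt \ov\gamma$. A short analysis of the admissible symmetric orthogonal linear parts, whose eigenvalues must lie in $\{\pm 1\}$, together with the disintegration of the optimal 3-plan furnished by Theorem \ref{BoBo} and its martingale conditions, should force $\ov\gamma$ to concentrate on the graph of a single dilation $x \mapsto \lambda x$ with $\lambda > 0$. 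Hence $\nu$ coincides with $\mu$ up to such a dilation centred at the common barycentre. The converse implication is exactly the sharpness computation already performed above.
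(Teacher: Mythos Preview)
Your derivation of \eqref{upper_bound} is correct and uses a variant of the paper's argument. Both start from Lemma~\ref{magic_formula} and integrate against an optimal $W_2$-plan $\hat\gamma$, but you use an asymmetric two-term telescoping together with the normalisation $\int w\,d\mu=0$ and the bound $\int|w|^2\,d\mu=\tfrac12\iint|\nabla\ov u(x)-\nabla\ov u(x')|^2\,\mu(dx)\mu(dx')\le\sigma_\mu^2$, whereas the paper normalises $\nabla\ov u(0)=0$ (so that $|\nabla\ov u(x)|\le|x|$ pointwise) and employs a symmetric four-term decomposition followed by four Cauchy--Schwarz inequalities. Your route is slightly shorter for the inequality itself; the paper's symmetric split pays off in the equality analysis. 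Your sharpness computation via dilations is the same as the paper's.

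The equality case, however, has a gap. From your two Cauchy--Schwarz steps you legitimately obtain, for $\hat\gamma$-a.e.\ $(x,y)$, the relations $w(x)=a(y-x)$ and $y=b\bigl(w(y)-w(x)\bigr)$ with constants $a,b\ge0$, together with $|\nabla\ov u(x)-\nabla\ov u(x')|=|x-x'|$ for $\mu\otimes\mu$-a.e.\ $(x,x')$ and $|w(y)-w(x)|=|y-x|$ on $\spt\hat\gamma$. But the ``analogous $\nu\otimes\nu$-statement'' you invoke does \emph{not} follow from your setup: the second term was bounded using $\int|y|^2\,d\nu=\sigma_\nu^2$ directly, with no variance identity for $\nu$ entering, so equality there yields only the conditions just listed and not an isometry of $\nabla\ov u$ on $\spt\nu$. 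Even granting all of your conditions, the claim that $\nabla\ov u$ restricts to a symmetric orthogonal \emph{affine} map on $\spt\mu\cup\spt\nu$ requires further argument, and the appeal to Theorem~\ref{BoBo} and its martingale conditions is not made concrete. By contrast, the paper's four Cauchy--Schwarz equalities directly furnish $\nabla\ov u(x)=a(y-x)$, $\nabla\ov u(y)=b(y-x)$, $x=c\bigl(\nabla\ov u(y)-\nabla\ov u(x)\bigr)$ and $y=e\bigl(\nabla\ov u(y)-\nabla\ov u(x)\bigr)$ with global constants, from which $y=\lambda x$ follows by two lines of algebra; this is why the symmetric decomposition is the more efficient choice for the necessity part.
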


Before giving the proof which is postponed to the end of this section, we present a  straightforward variant of the inequality \eqref{upper_bound}
which, in constrast, is strict whenever $\mu\not=\nu$.
\begin{corollary}\label{variant_upper}  Under the assumptions of Theorem \ref{thm_upper}, we have the following upper bound,
\begin{equation}
		\label{varupper}
		 Z_2(\mu,\nu) \, \leq \ \sqrt{\frac{\var\, \mu + \var\, \nu}{2}}\ W_2(\mu,\nu) ,
	\end{equation}
where the inequality is an equality if and only if $\mu=\nu$. 
\end{corollary}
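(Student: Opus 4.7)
The plan is to obtain \eqref{varupper} from Theorem \ref{thm_upper} by a one-line application of the arithmetic–quadratic mean inequality, and then to exploit the (strict) characterization of equality in Theorem \ref{thm_upper} to rule out all cases other than $\mu=\nu$.

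First, recall that for any nonnegative reals $a,b$ one has $\tfrac{1}{2}(a+b) \leq \sqrt{\tfrac{a^2+b^2}{2}}$, with equality if and only if $a=b$ (this is just $(a-b)^2 \geq 0$). Setting $a=\sigma_\mu$, $b=\sigma_\nu$ and combining with \eqref{upper_bound} gives, under the standing assumption $[\mu]=[\nu]$,
\begin{equation*}
Z_2(\mu,\nu)\ \leq\ \tfrac{1}{2}(\sigma_\mu+\sigma_\nu)\,W_2(\mu,\nu)\ \leq\ \sqrt{\tfrac{\var\mu+\var\nu}{2}}\,W_2(\mu,\nu),
\end{equation*}
which is the claimed bound \eqref{varupper}.

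For the equality statement, the direction $\mu=\nu \Rightarrow$ equality is trivial since both sides vanish. Conversely, suppose equality holds in \eqref{varupper}; I would split into two cases. If $\sigma_\mu \ne \sigma_\nu$, the AM–QM step is strict, so equality in \eqref{varupper} forces $W_2(\mu,\nu)=0$, i.e. $\mu=\nu$, contradicting $\sigma_\mu\ne\sigma_\nu$. If $\sigma_\mu=\sigma_\nu$, the two constants $\tfrac12(\sigma_\mu+\sigma_\nu)$ and $\sqrt{\tfrac{\var\mu+\var\nu}{2}}$ coincide, so equality in \eqref{varupper} forces equality in \eqref{upper_bound}. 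By the characterization in Theorem \ref{thm_upper}, $\mu$ and $\nu$ then coincide up to a dilation $T_\lambda$ centred at $[\mu]=[\nu]$, with $\lambda>0$. The identity $\sigma_\nu=\lambda\,\sigma_\mu$ and the assumption $\sigma_\mu=\sigma_\nu$ give $\lambda=1$, hence $\mu=\nu$.

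The only step that requires any care is the equality analysis in the sub-case $\sigma_\mu=\sigma_\nu$, since here \eqref{varupper} reduces exactly to \eqref{upper_bound} and the strict gap must be extracted from Theorem \ref{thm_upper} itself. This is precisely why the statement of Theorem \ref{thm_upper} is formulated with its sharpness criterion (equality up to dilation): with $\sigma_\mu=\sigma_\nu>0$ the dilation factor is forced to be $1$, which rules out all nontrivial cases. The only mild subtlety is that dilations are understood in the positive-scaling sense, as implicit in the normalization $\lambda>0$ used in $T_\lambda$ throughout the paper; no further argument is needed.
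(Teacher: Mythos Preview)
Your proof is correct and follows essentially the same route as the paper: both combine Theorem~\ref{thm_upper} with the arithmetic--quadratic mean inequality $\tfrac{a+b}{2}\le\sqrt{\tfrac{a^2+b^2}{2}}$, and both extract the equality case by noting that equality in \eqref{varupper} forces $\sigma_\mu=\sigma_\nu$ together with equality in \eqref{upper_bound}, whence the dilation factor must be~$1$. Your explicit case split ($\sigma_\mu\neq\sigma_\nu$ versus $\sigma_\mu=\sigma_\nu$) is a minor cosmetic variation on the paper's more compressed phrasing.
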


\begin{proof} The inequality \eqref{varupper} follows from \eqref{upper_bound} and from the inequality $\frac {a+b}{2} \le \sqrt{\frac{a^2 + b^2}{2}}$.
 Noticing that the latter inequality is strict if $a\not=b$, we infer that an equality in \eqref{varupper} implies the equality in 
 \eqref{upper_bound} along with  $\sigma_\mu=\sigma_\nu$. Owing to the last statement of Theorem \ref{thm_upper}, we conclude that $\mu=\nu$  since the only possible dilation sending $\mu$ to $\nu$ is the identity.
\end{proof}

\begin{remark}\label{equivalence} From Theorem \ref{main} and Theorem \ref{thm_upper}, we infer that on the subset  of centered probabilities  with finite second order moment,
    \begin{equation*}
        \mathcal{P}_2^{(0)}(\Rd) := \Big\{ \mu \in \mathcal{P}_2(\Rd) \, : \, [\mu] =0 \Big\},
    \end{equation*}
    the topologies induced by the $W_2$ and the $Z_2$ distances are equivalent. We thus recover the result of \cite{belili2000}. Clearly, by the invariance with respect to translations, the same statement holds true if we enforce any fixed barycentre $a\in \Rd$, not necessarily the origin.
\end{remark}

\begin{proof}[Proof of Theorem \ref{thm_upper}] Without any loss of generality, we can assume that the two probabilities are centered, i.e. $[\mu] = [\nu] = 0$. Let $\ov{u}$ be a solution of the problem \eqref{Zolotarev}. As  $\mu-\nu$ vanishes on affine functions, it is not restrictive to assume that $\ov u(0)=0$ and $\nabla \ov{u}(0) = 0$. Since $\mathrm{lip}(\nabla \ov{u}) \leq 1$, this guarantees that $\abs{\nabla \ov{u}(x)} \leq \abs{x}$ for any $x \in \Rd$.

\subsection*{Step 1}[\emph{proving the upper bound and the sufficient condition for equality}]
Let us  take an optimal plan $\hat{\gamma}\in \Gamma(\mu,\nu)$ realizing the distance $W_2(\mu,\nu)$. Thanks to Lemma \ref{magic_formula},  we obtain,
\begin{align}
	\label{integral_chain}
	2 Z_2(\mu,\nu) & =\iint \Big( \pairing{y,\nabla \ov{u}(y)} - \pairing{x,\nabla \ov{u}(x)} \Big) \hat{\gamma}(dxdy) \\
	\nonumber
	& =\iint \Big(\frac{1}{2} \pairing{y-x,\nabla \ov{u}(x) + \nabla \ov{u}(y)} + \frac{1}{2} \pairing{x+y,\nabla \ov{u}(y) - \nabla \ov{u}(x)}\Big) \hat{\gamma}(dxdy) \\
	\nonumber
	& =\frac{1}{2}\iint  \pairing{\nabla \ov{u}(x),y-x}  \, \hat{\gamma}(dxdy) + \frac{1}{2}\iint  \pairing{\nabla \ov{u}(x),y-x} \, \hat{\gamma}(dxdy) \\
	\nonumber
	& \qquad + \frac{1}{2}\iint  \pairing{x,\nabla \ov{u}(y) - \nabla \ov{u}(x)} \, \hat{\gamma}(dxdy) + \frac{1}{2}\iint  \pairing{y,\nabla \ov{u}(y) - \nabla \ov{u}(x)} \, \hat{\gamma}(dxdy)   \\
	\label{ineq1}
	& \leq \frac{1}{2} \left(\iint \abs{\nabla \ov{u}(x)}^2 \hat\gamma(dxdy) \right)^{\frac{1}{2}} \left(\iint \abs{y-x}^2 \hat\gamma(dxdy) \right)^{\frac{1}{2}} \\
	\nonumber
	& \qquad + \frac{1}{2} \left(\iint \abs{\nabla \ov{u}(y)}^2 \hat\gamma(dxdy) \right)^{\frac{1}{2}} \left(\iint \abs{y-x}^2 \hat\gamma(dxdy) \right)^{\frac{1}{2}} \\
	\nonumber
	& \qquad \qquad  + \frac{1}{2} \left(\iint \abs{x}^2 \hat\gamma(dxdy) \right)^{\frac{1}{2}} \left(\iint \abs{\nabla \ov{u}(y) - \nabla \ov{u}(x)}^2 \hat\gamma(dxdy) \right)^{\frac{1}{2}} \\
	\nonumber
	& \qquad \qquad \qquad  + \frac{1}{2} \left(\iint \abs{y}^2 \hat\gamma(dxdy) \right)^{\frac{1}{2}} \left(\iint \abs{\nabla \ov{u}(y) - \nabla \ov{u}(x)}^2 \hat\gamma(dxdy) \right)^{\frac{1}{2}} \\
		\label{ineq2}
	& \leq \left(\int \abs{x}^2 \mu(dx) \right)^{\frac{1}{2}} \left(\iint \abs{y-x}^2 \hat\gamma(dxdy) \right)^{\frac{1}{2}} \\
	\nonumber
	& \qquad +\left(\int \abs{y}^2 \nu(dy) \right)^{\frac{1}{2}} \left(\iint \abs{y-x}^2 \hat\gamma(dxdy) \right)^{\frac{1}{2}} \\
	\nonumber
	& = \sigma_\mu W_2(\mu,\nu) + \sigma_\nu W_2(\mu,\nu).
\end{align}
Above, the first inequality \eqref{ineq1} uses  Cauchy-Schwarz inequality in $L^2_{\hat{\gamma}}(\Rd \times \Rd;\Rd)$ four times. To obtain the second inequality \eqref{ineq2} we used the fact that  $\abs{\nabla \ov{u}(x)} \leq \abs{x}$ and $\abs{\nabla \ov{u}(y) - \nabla \ov{u}(x)} \leq \abs{y-x}$. Eventually we  pass to the last line by using the optimality of $\hat\gamma$ and  the fact that $\mu, \nu$ are assumed to be centred. This proves the upper bound inequality \eqref{upper_bound} and the fact that $h$ given by \eqref{h-function} satisfies  $h(a,b)\le \frac1{2} (a+b)$.

Next, in order to show that  \eqref{upper_bound} is optimal in the sense that $h(a,b)= \frac1{2} (a+b)$, we check that the equality is saturated by all centred pairs $(\mu,\nu)$  
such that $\mu$ and $\nu$ coincide up to a dilation. Without any loss of generality, we can assume that $\nu = T_\lambda^\# \mu = (\lambda x) ^\# \mu $  with $\lambda \ge1$. Then, it is easy to see that $\mu \preceq_c \nu$ \footnote{by Strassen theorem, the probability kernel $\gamma^x  = \frac{\lambda-1}{\lambda} \nu + \frac{1}{\lambda} \delta_{\lambda x}$ renders $\gamma = \mu \otimes \gamma^x$ an element of $\Gamma_{\mathrm{M}}(\mu,\nu)$}. Accordingly, as pointed out in Remark \ref{stochastic-opt}, we have  $Z_2(\mu,\nu) = \frac{1}{2}\big(\sigma_\nu^2 - \sigma^2_\mu\big) = \frac{1}{2}(\lambda^2-1) \,\sigma^2_\mu$. On the other hand, the transport map $T_\lambda =\nabla \big( \frac{\lambda}{2} \abs{\argu}^2\big)$ is a gradient of a convex function. Therefore, by Brenier theorem \cite{brenier1991},  $W_2(\mu,\nu) = (\int \abs{T_\lambda(x)-x}^2 \mu(dx))^{1/2} = (\lambda- 1)\, \sigma_\mu$. 
It follows that $Z_2(\mu,\nu) = \frac{1}{2} (\lambda+1) \,\sigma_\mu \, W_2(\mu,\nu) = \frac{1}{2} (\sigma_\mu+\sigma_\nu)\, W_2(\mu,\nu) .$

\subsection*{Step 2}[\emph{necessary condition for equality}]
Now we  start with the assumption that \eqref{upper_bound} is an equality for some pair $(\mu,\nu)$. We may assume without any loss of generality that $\sigma_\nu \ge \sigma_\mu$. Furthermore, we can exclude the cases when either $\mu = \delta_{x_0}$ or $\mu = \nu$ since both trivially satisfy the dilation condition.   Accordingly,  we  can assume that both $\sigma_\mu$ and $\sigma_\nu$ are strictly positive, and we  want to show  that  $\nu = T_\lambda^\# \mu$ for a suitable $\lambda\in (1, +\infty)$.

Going back to the chain of relations  starting from \eqref{integral_chain}, we see that the equality in \eqref{upper_bound} implies that the inequalities in \eqref{ineq1} and in \eqref{ineq2} are actually equalities. From the equality in  \eqref{ineq2}, we infer that, for $\hat{\gamma}$-a.e. $(x,y)$, we have,
\begin{equation}
	\label{norms_eq}
	\abs{\nabla \ov{u}(x)} = \abs{x}, \qquad \abs{\nabla \ov{u}(y)} = \abs{y}, \qquad  \abs{\nabla \ov{u}(y)-\nabla \ov{u}(x)} = \abs{y-x},
\end{equation}
 By the continuity of $\nabla \ov{u}$, the equalities above  hold true for every $(x,y) \in \spt \hat{\gamma}$. Since $\mu\not=\nu$, $x-y$ is not  zero as an element of
 $L^2_{\hat{\gamma}}(\Rd \times \Rd;\Rd)$,  and the same can be deduced for the  function  
 $\nabla \ov{u}(y) - \nabla \ov{u}(x)$ which shares the same norm by vitue of the third equality in \eqref{norms_eq}. 
%
  With that information at our disposal, we may identify the conditions under which the inequality  \eqref{ineq1} is an equality. The four Cauchy-Schwarz inequalities in the vector valued space $L^2_{\hat{\gamma}}(\Rd \times \Rd;\Rd)$ must be equalities, which means that for any $(x,y) \in \spt \hat{\gamma}$,
\begin{alignat}{2}
	\label{par1}
	&\nabla \ov{u} (x) = a\, (y-x), \qquad  	&&\nabla \ov{u} (y) = b \, (y-x), \\
	\label{par2}
	& x = c \, \big(\nabla \ov{u}(y) - \nabla \ov{u}(x) \big), \qquad && y = e \, \big(\nabla \ov{u}(y) - \nabla \ov{u}(x) \big),
\end{alignat}
for suitable constants $a,b,c,e \geq 0$  which \emph{do not depend} on $x,y$. The condition $\sigma_\mu>0$ implies  that the projection of $\spt \hat{\gamma}$ on the first component does no reduce to $\{0\}$. Hence, from the first equality in \eqref{par2}, we infer that  $c$ must be strictly positive.
The equalities in the sequel of the proof will hold true for each pair $(x,y)\in \spt \hat{\gamma}$. 
From \eqref{par1}, we get  $\nabla \ov{u}(y)-\nabla \ov{u}(x) = (b-a) (y-x)$. Going back to \eqref{norms_eq}, this means that $\eps := b-a  \in \{-1,1\}$. In turn, from \eqref{par2}, we get $x= c \eps  \, (y-x)$,  which we can rewrite as,
\begin{equation}
	y = \lambda x, \quad \text{where} \quad  \lambda := 1+ \frac{1}{c\eps}.
\end{equation}
Therefore, for  $\lambda$ as above, it holds  that  $\hat{\gamma} = (\mathrm{id},T_\lambda) ^\# \mu$, hence $\nu = T_\lambda^\# \mu$.
Moreover, our assumptions  $\sigma_\nu \ge\sigma_\mu$, $\mu \neq \nu$ imply that $\lambda > 1$, which gives $\epsilon=1$ and  $\mu \preceq_c \nu$ (so that an optimal potential $\ov u$ is given by $\ov u= \frac1{2} |x|^2$).
 This ends the proof. 
\end{proof}

\bibliographystyle{plain}

\bigskip

{\footnotesize

\noindent
Karol Bo{\l}botowski:\\
Faculty of Mathematics, Informatics and Mechanics, University of Warsaw\\
2 Banacha Street, 02-097 Warsaw - POLAND\\
{\tt k.bolbotowski@mimuw.edu.pl}

\medskip

\noindent
	Guy Bouchitt\'e:\\
	Laboratoire IMATH, Universit\'e de Toulon\\
	BP 20132, 83957 La Garde Cedex - FRANCE\\
	{\tt bouchitte@univ-tln.fr}
	
}

\end{document}